\DeclareMathOperator{\rank}{rank}
  \DeclareMathOperator{\diag}{diag}
\newtheorem{definition}{\sc Definition}[section]
\newtheorem{theorem}{\sc Theorem}[section]
\newtheorem{lemma}{\sc Lemma }[section]
\newtheorem{eje}{\sc Example }[section]
\newtheorem{coro}{\sc Corollary}[section]
\newcommand{\de}{\mathbb{D}}
\newcommand{\co}{\mathbb{C}}
\newcommand{\adj}{{\rm Adj\,}}
\newcommand{\dps}{\displaystyle}
\renewcommand\theenumi{\@roman\c@enumi}\makeatother
\mathchardef\pFcomma=\mathcode`, 
\begin{document}
\title[Refined interlacing properties]{Refined interlacing properties for zeros of paraorthogonal polynomials on the unit circle}
\author{K. Castillo}
\author{J. Petronilho}

\address{CMUC, Department of Mathematics, University of Coimbra, 3001-501 Coimbra, Portugal}

\email[K. Castillo]{kenier@mat.uc.pt}
\email[J. Petronilho]{josep@mat.uc.pt}

\subjclass[2010]{15A42}
\date{\today}
\keywords{Paraorthogonal polynomials on the unit circle, zeros, unitary matrices, eigenvalues, interlacing, rank one perturbations.}
\begin{abstract}
 The purpose of this note is to extend in a simple and unified way the known results on interlacing of zeros of paraorthogonal polynomials on the unit circle. These polynomials can be regarded as the characteristic polynomials of any matrix similar to an unitary upper Hessenberg matrix with positive subdiagonal elements. 
\end{abstract}
\maketitle

\section{Introduction and main result}
\label{sec:intro}
The study of zeros of {\em orthogonal polynomials on the real line} (OPRL) can be regarded as an eigenvalue problem for Jacobi matrices\footnote{A symmetric tridiagonal matrix whose next-to-diagonal elements are positive (cf. \cite[p. $36$]{HJ}).}.  
This allows us to go back to one of the most important single books in the nineteenth century, {\em Cours d'analyse de l'\'Ecole royale polytechnique} (1821) by Cauchy to deduce, at least in the weak sense, the zero interlacing property of consecutive OPRL from the simplest form of the nowadays called Cauchy interlacing theorem. The search of more refined eigenvalue interlacing properties of Jacobi matrices was probably initiated by Cauchy himself in his work {\em Sur l' \'Equation \`a l' Aide de Laquelle on D\'etermine les Inegalit\'ees S\'eculaires des Mouvements des Plan\`etes} (1829) and later continued by several authors, including in the second half of the last century Wilkinson \cite{W88}, Kahan \cite{Ka66}, Golub \cite{G72}, Hill and Parlett \cite{HP92}, and Bar-On \cite{B96}. In the same spirit, this work recovers one of the earliest approaches used to study zeros of {\em paraorthogonal polynomials on the unit circle} (POPUC), which is based on an eigenvalue problem for certain unitary matrices which bear many similarities with Jacobi matrices (cf. \cite{K85, AGR86, G86, AGR88, GR90, BE91, DG91b, W93, B93, BH95, CMV03, S05I, S05II, KN07, S07, S07b, S11}).

Without wishing to delve into a historical discussion\footnote{The weakened orthogonality condition that POPUC satisfy appeared in \cite[Equation $4.10$]{DG88} as far as we can tell. While it is true that in Geronimus' 1944 paper \cite[Theorem IV]{G44} such polynomials were presented.}, as far as we know, the POPUC\footnote{In \cite{DG88}, Delsarte and Genin called to these polynomials (symmetric) {\em predictor polynomials} and its weakened orthogonality property {\em quasiorthogonality}. In \cite{DG90}, they refer to these polynomials as {\em quasiorthogonal polynomials on the unit circle}. This denomination could be also supported by the fact that in 1946 Geronimus regarding to these polynomials wrote that they {\em ``...play the same role here as the quasi-orthogonal polynomials of M. Riesz in the Hamburger problem."} (cf. \cite[Remark I]{G46}). The denomination POPUC was coined in \cite{JNT89}.}   were introduced  (in a somewhat hidden form) and successfully developed in a serie of papers by Delsarte and Genin at the end of the 1980's \cite{DG88, DG91a, DG91b}, when they were working in signal processing. In \cite{DG91b}, the authors focuses on  the problem of computing the zeros of POPUC regarded as an eigenvalue problem for an unitary upper Hessenberg matrix with positive subdiagonal elements. Elegant and recent proofs of most interlacing properties of zeros of POPUC shared with OPRL are due to Simon \cite{S07} (cf. \cite[Theorem $2.14.4$]{S11}) where the theory of rank one perturbations plays a central role. However, before such work (and references therein)  the zeros of POPUC were studied by the Linear Algebra community based on  ideas close to those of Simon but supported on more elementary facts. Further analysis of these ideas will allow us to easily extend the known results. Indeed, our main purpose is to prove and improve, in connection with the works of Delsarte and Genin on the subject, the known zero interlacing properties of POPUC, based on the development of the ideas discussed by Arbenz and Golub  in \cite[Section $6$]{AGo88}\footnote{Such ideas were pioneering employed in the present context by Bohnhorst in her Ph.D. thesis \cite{B93} defended in 1993 at the Bielefeld University under the supervision of Elsner.}.

Here and below, we mainly follow the notation of \cite{S05I, S05II, S11}. Denote by $\de$ the open unit disk and by $\mathbb{S}^1$ its boundary, i.e.,$$\de:=\{z \in \co : |z|<1\}\,\, ,\quad \mathbb{S}^1:=\{z \in \co : |z|=1\}\, .$$
Let $(\alpha_0, \dots, \alpha_{n-1}, b_n)$ with $\alpha_j \in \de$ ($j=0,1,\dots,n-1$) and $b_n \in \mathbb{S}^1$. Set
\begin{align*}
\Theta_j:=\Theta(\alpha_j), \quad \Theta(\alpha):=\begin{pmatrix}
\overline{\alpha} & \rho \\
\rho & -\alpha
\end{pmatrix}\,, \quad \rho:=\left(1-|\alpha|^2\right)^{1/2}\, .
\end{align*}
Define the $(n+1)$-by-$(n+1)$ matrix 
\begin{align}\label{lm}
\mathcal{C}:=\mathcal{L}\,\mathcal{M}\, ,
\end{align}
where $\mathcal{L}$ and $\mathcal{M}$ are given explicitly by
\begin{align*}
\mathcal{L}&:=\left\{ 
\begin{array}{l l}
\Theta_0 \oplus \Theta_2 \oplus \cdots \oplus \Theta_{n-2}  \oplus \overline{b}_{n}& \ \ \ \ \ \ \ \mbox{if $n$ is even}\\[0.5em]
 \Theta_0 \oplus \Theta_2 \oplus \cdots \oplus \Theta_{n-1}  & \ \ \ \ \ \ \ \mbox{if $n$ is odd}\, ,\\ \end{array} \right.\\
\mathcal{M}&:=\left\{
\begin{array}{l l}
1 \, \oplus \, \Theta_1 \oplus \Theta_3 \oplus \cdots \oplus \Theta_{n-1}    & \ \mbox{if $n$ is even }\\[0.5em]
1 \, \oplus \, \Theta_1 \oplus \Theta_3 \oplus \cdots \oplus \Theta_{n-2} \oplus \overline{b}_{n}  &\ \mbox{if $n$ is odd}.
 \end{array} \right.
\end{align*}
Any unitary $(n+1)$-by-$(n+1)$ upper Hessenberg matrix with positive subdiagonal elements is uniquely parameterized by $2n+1$ real numbers that compose the parameters of the array $(\alpha_0, \dots, \alpha_{n-1}, b_n)$ \cite{G82o} (cf. \cite{G82} and \cite[Proposition $1$]{AG92}). The resulting matrix after this process is referred as the Schur parametric form of the original matrix. The factorization \eqref{lm}, which is unitarily similar to the Schur parametric form of an upper Hessenberg matrix with positive subdiagonal elements, was presented by Bunse-Gerstner and Elsner \cite{BE91} (cf. \cite[Section $12.2.10$]{GV83} and \cite[Definition $3.3$ and Lemma $3.4$]{B93}).  The explicit unitary pentadiagonal or double-staircase form of $\mathcal{C}$ (referred as {\em Doppel-Treppen-Matrix} in the original German source) was studied extensively by Bohnhorst \cite{B93}, see Figure \ref{c} for an $8$-by-$8$ example (cf. \cite[Equation $3.9$]{B93} and \cite[Figure $1.1$]{KN07}). The matrix $\mathcal{C}$ becomes a very popular object in the Mathematical Physics and Orthogonal Polynomials communities after the work \cite{CMV03}, specially after Simon's monographs \cite{S05I, S05II}  where it was called (improper) CMV matrix (cf. \cite{S07b, S11}).

\begin{figure}[h]
$$
\begin{pmatrix}
\overline{\alpha}_0 & \rho_0 \overline{\alpha}_1 & \rho_0 \rho_1 &  &  &  &  &  \\
\rho_0 & -\alpha_0 \overline{\alpha}_1 &  -\alpha_0 \rho_1 &  &  &  &  &  \\
& \rho_1 \overline{\alpha}_2 & -\alpha_1 \overline{\alpha}_2 &  \rho_2 \overline{\alpha}_3 & \rho_2 \rho_3 &  &  &  \\
& \rho_1 \rho_2 & -\alpha_1 \rho_2 &  -\alpha_2 \overline{\alpha}_3 & -\alpha_2 \rho_3 &  &  &  \\
 &  &  & \rho_3 \overline{\alpha}_4 & -\alpha_3 \overline{\alpha}_4 &  \rho_4 \overline{\alpha}_5 & \rho_4 \rho_5 &  \\
 &  &  & \rho_3 \rho_4 & -\alpha_3 \rho_4 &  -\alpha_4 \overline{\alpha}_5 & -\alpha_4 \rho_5 &  \\
 &  &  &  &  & \rho_5 \overline{\alpha}_6 &  -\alpha_5 \overline{\alpha}_6 & \rho_6 \overline{b}_7 \\
 &  &  &  &  & \rho_5 \rho_6 &  -\alpha_5 \rho_6 & -\alpha_6 \overline{b}_7
\end{pmatrix}
$$
\caption{The matrix $\mathcal{C}$ for $n=7$.}
\label{c}
\end{figure}
In order to make the notation more transparent, we write $\mathcal{C}(\alpha_0,\dots,$ $\alpha_{n-1},b_n)$ instead of $\mathcal{C}$. We choose the representation \eqref{lm} instead of their unitary similar upper Hessenberg matrix for a technical reason related to the manner in which  Lemma \ref{AG} below is presented. In the next definition and subsequently, $\mathcal{I}$ denotes the identity matrix, whose order is made explicit or may be inferred from the context.
 \begin{definition}[cf. {\cite[Proposition $3.2$]{S07}}]\label{POPUC}
Let $\mathcal{C}(\alpha_0,\dots,\alpha_{n-1},b_n)$ be the matrix given by \eqref{lm}, where $\alpha_j \in \de$ ($j=0,1,\dots,n-1$) and $b_n \in \mathbb{S}^1$. The (monic) polynomial $P_{n+1}$ defined by
 \begin{align*}
P_{n+1}(z):=\det \big(z \mathcal{I}-\mathcal{C}(\alpha_0,\dots,\alpha_{n-1},b_n)\big)
 \end{align*}
is the POPUC of degree $n+1$ associated with the array $(\alpha_0,\dots,\alpha_{n-1},b_n)$. 
\end{definition}

It is not difficult to see that the eigenvalues of $\mathcal{C}(\alpha_0,\dots,\alpha_{n-1},b_n)$ are simple.  This fact was observed in $1944$ by Geronimus \cite[Theorem IV]{G44}  (cf.  \cite[Theorem\,III]{G46}, \cite[Theorem $9.1$]{G54} and \cite[Theorem $7.2.2$]{A64}) using the connection between POPUC and OPUC. Note that if $b_n$ were in $\de$, then the corresponding characteristic polynomial would be an OPUC and their zeros would be in $\de$.  A remarkable property of the eigenvectors of $\mathcal{C}(\alpha_0,\dots,\alpha_{n-1},b_n)$ is the fact that all their components are nonzero  (cf. \cite[Chapter $4$]{S05I} and references therein). This property is clearly valid also for the corresponding unitarily similar Hessenberg matrix. 

\begin{definition}
Two finite subsets $\{\zeta_1, \zeta_2, \dots, \zeta_{m}\}$ and $\{\xi_1, \xi_2, \dots, \xi_{n}\}$ $(1\leq m\leq n)$ of $\mathbb{S}^1$ interlace (resp. strictly interlace)  whenever there exist $n-m$ points $\zeta_{m+1}, $ $\zeta_{m+2}, \dots, \zeta_{n} \in \mathbb{S}^1$ such that any closed arc  (resp. open arc) on $\mathbb{S}^1$ connecting two distinct elements of $\{\zeta_1, \zeta_2, \dots, \zeta_{n}\}$ contains at last one element of $\{\xi_1, \xi_2, \dots, \xi_{n}\}$, and vice versa.
\end{definition}

We can now formulate our main result.

\begin{theorem}\label{1}
Let $\mathcal{C}(\alpha_0,\dots,\alpha_{n-1},b_n)$ be a matrix given by \eqref{lm}, where $\alpha_j \in \de$ ($j=0,1,\dots,n-1$) and $b_n \in \mathbb{S}^1$. The following sentences hold:
\begin{enumerate}
\item [{\rm (i)}] Let $\beta \in \mathbb{S}^1 \setminus \{1\}$ and define $\mathcal{C}_m^{\beta}:=\mathcal{C}(\alpha_0,\dots, \alpha_{m-1}, \beta \alpha_{m},\dots,$ $ \beta \alpha_{n-1}, \beta b_n)$ ($0 \leq m <n$) and $\mathcal{C}_n^\beta:=\mathcal{C}(\alpha_0,\dots, \alpha_{n-1},\beta b_n)$. Then the eigenvalues of $\mathcal{C}(\alpha_0,\dots,\alpha_{n-1},b_n)$ and $\mathcal{C}_m^\beta$ strictly interlace on $\mathbb{S}^1$ for each $0\leq m \leq n$.
\item [{\rm (ii)}] For each $0\leq m < n$, let $b_m \in \mathbb{S}^1$, and let $\mathcal{C}(\alpha_0,\dots,\alpha_{n-1},b_n)$ be  partitioned as
\begin{align}\label{C}
\mathcal{C}(\alpha_0,\dots,\alpha_{n-1},b_{n})=\begin{pmatrix}
\mathcal{C}_{11} & \mathcal{C}_{12}\\
\mathcal{C}_{21} & \mathcal{C}_{22}
\end{pmatrix}\, ,
\end{align}
$\mathcal{C}_{11}$ being the $(m+1)$-by-$(m+1)$ leading principal submatrix of $\mathcal{C}(\alpha_0,\dots,$ $\alpha_{n-1},b_n)$. For each $\zeta \in \mathbb{S}^1$, define recursively the numbers\footnote{In \cite{DG91a} (cf. \cite[Equation $2.6$]{DG91b}), Delsarte and Genin have shown that if the $b_{j}(\zeta)$'s (known as {\em pseudo reflection coefficients}) are given by \eqref{def}, then the corresponding POPUC satisfy a three-term recurrence relation (cf. \cite{CCP16}). Bunse-Gerstner and He \cite{BH95} have provided an illuminating discussion of the works of Delsarte and Genin on POPUC in matrix terms.} 
\begin{align}\label{def}
  b_{n}(\zeta):=b_n, \quad
 b_{j}(\zeta):=\frac{\overline{\zeta} \ \alpha_j+ b_{j+1}(\zeta)}{\overline{\alpha}_j b_{j+1}(\zeta) +\overline{\zeta}} \, \quad  (j=n-1,\dots,1, 0) \,.
 \end{align}
 Set\footnote{$\sigma(\mathcal{A})$ denotes the spectrum of $\mathcal{A}$.} 
\begin{align*}
{A}:={C} \, \cap \, \sigma(\mathcal{C}(\alpha_0,\dots,\alpha_{m-1},b_m)), \quad
{B}:=\sigma(\mathcal{N}) \, \backslash \, {A}\, ,
\end{align*}
where ${C}:=\{\zeta \in \mathbb{S}^1: b_m(\zeta)=b_m\}$, $\mathcal{N}:=\mathcal{C}(\alpha_{m+1},\dots,\alpha_{n-1},b_{n})  \, \mathcal{D}$ with $\mathcal{D}:=\diag \, (\gamma_m, \mathcal{I})$,  and
 \begin{align}\label{auxgamma}
 \gamma_{m}:=\frac{\alpha_{m}-b_{m}}{ \overline{\alpha}_{m}b_{m}-1}\, .
\end{align}
Then $\mathcal{C}(\alpha_0,\dots,\alpha_{n-1},b_{n}) $ and $\mathcal{C}(\alpha_0,\dots,\alpha_{m-1},b_{m})$ have at most $\min \{m+1,n-m\}$ common eigenvalues. More precisely, $\mathcal{C}(\alpha_0,\dots,$ $\alpha_{n-1},b_{n}) $ and $\mathcal{C}(\alpha_0,\dots,\alpha_{m-1},b_{m})$ have ${A}$ as the set of common eigenvalues, ${A}$  being also given by the alternative expression
$$
{A}=\sigma(\mathcal{N}) \, \cap \, \sigma(\mathcal{C}(\alpha_0,\dots,\alpha_{m-1},b_m))\, .
$$ 
Furthermore, the elements of the sets $\sigma\big(\mathcal{C}$ $(\alpha_0,\dots,$ $\alpha_{n-1},b_n)\big) \setminus {A}$  and $\sigma\big(\mathcal{C}(\alpha_0,$ $\dots,\alpha_{m-1},$ $b_{m})\big)\, \cup \, {B}$ strictly interlace on $\mathbb{S}^1$.   \end{enumerate}

\end{theorem}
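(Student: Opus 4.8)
The plan is to reduce both assertions to the spectral behaviour of a single rank one multiplicative perturbation of a unitary matrix, governed by Lemma \ref{AG}, using throughout the factorization $\mathcal{C}=\mathcal{L}\mathcal{M}$ in \eqref{lm} together with the elementary identity $\Theta(\beta\alpha)=\diag(\overline{\beta},1)\,\Theta(\alpha)\,\diag(1,\beta)$, valid for every $\beta\in\mathbb{S}^1$ (note that $\rho$ is unchanged since $|\beta|=1$). We also use repeatedly that $\mathcal{C}$ and all the comparison matrices below are unitary, and that every eigenvector of a matrix of the form \eqref{lm} has all its components nonzero; the latter is what produces the word \emph{strictly} in each interlacing statement, since strictness in Lemma \ref{AG} amounts to the perturbation vector not being orthogonal to any eigenvector.

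For part (i), first take $m=n$: since $\overline{b}_n$ is the trailing $1$-by-$1$ block of $\mathcal{L}$ (for $n$ even) or of $\mathcal{M}$ (for $n$ odd), one reads off from \eqref{lm} that $\mathcal{C}_n^\beta=\big(\mathcal{I}+(\overline{\beta}-1)e_{n+1}e_{n+1}^{*}\big)\mathcal{C}$. For $0\le m<n$, the replacement $\alpha_j\mapsto\beta\alpha_j$ ($j\ge m$), $b_n\mapsto\beta b_n$ factors through the displayed identity as $\mathcal{L}_m^\beta=D_1\,\mathcal{L}\,D_2$ and $\mathcal{M}_m^\beta=D_3\,\mathcal{M}\,D_4$ with diagonal unitary matrices equal to $\mathcal{I}$ on the first $m+1$ coordinates; the interleaving of the $\Theta$-blocks of $\mathcal{L}$ and $\mathcal{M}$ forces $D_2D_3=\mathcal{I}$, so $\mathcal{C}_m^\beta=D_1\,\mathcal{C}\,D_4$, and — since $b_n$ is rotated as well — $D_4D_1$ is a diagonal unitary differing from $\mathcal{I}$ in a single entry $\overline{\beta}$, at one seam index $k=k(m)$. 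In either case, up to a unitary similarity $\mathcal{C}_m^\beta$ equals $\mathcal{C}$ composed with a unitary rank one perturbation $\mathcal{I}+(\overline{\beta}-1)vv^{*}$ whose vector $v$ is orthogonal to no eigenvector of $\mathcal{C}$ (here one uses that those eigenvectors have no vanishing component); Lemma \ref{AG} then gives the strict interlacing on $\mathbb{S}^1$.

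For part (ii), write $\widehat{\mathcal{C}}:=\mathcal{C}(\alpha_0,\dots,\alpha_{m-1},b_m)$. The key is a decoupling identity. Since $\alpha_m\in\de$ and $b_m\in\mathbb{S}^1$, a short computation gives $\gamma_m\in\mathbb{S}^1$ and shows that $\gamma_m$ in \eqref{auxgamma} is precisely the value making $\Theta(\alpha_m)-\diag(\overline{b}_m,\gamma_m)$ of rank one. Replacing the $\Theta_m$-block of $\mathcal{C}=\mathcal{L}\mathcal{M}$ — which lies in $\mathcal{L}$ or in $\mathcal{M}$ according to the parity of $m$ — by $\diag(\overline{b}_m,\gamma_m)$, and absorbing the rank one correction into a unitary factor $W=\mathcal{I}+(\text{rank one})$, produces
\[
\mathcal{C}(\alpha_0,\dots,\alpha_{n-1},b_n)=W\,\big(\widehat{\mathcal{C}}\oplus\mathcal{N}'\big),
\]
the seam between coordinates $m+1$ and $m+2$ being already broken in whichever of $\mathcal{L},\mathcal{M}$ does not carry $\Theta_m$; here $\mathcal{N}'$ is the product, in the opposite order, of the two block-diagonal factors defining $\mathcal{N}=\mathcal{C}(\alpha_{m+1},\dots,\alpha_{n-1},b_n)\,\mathcal{D}$, so $\sigma(\mathcal{N}')=\sigma(\mathcal{N})$ by $\sigma(XY)=\sigma(YX)$. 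Let $\zeta\in\sigma(\widehat{\mathcal{C}})$. If $\zeta\in\sigma(\mathcal{N}')=\sigma(\mathcal{N})$, the $\zeta$-eigenspace of $\widehat{\mathcal{C}}\oplus\mathcal{N}'$ has dimension at least two, hence meets the kernel of the rank one correction, so $\zeta\in\sigma(\mathcal{C}(\alpha_0,\dots,\alpha_{n-1},b_n))$. If $\zeta\notin\sigma(\mathcal{N})$, that eigenspace is one-dimensional, spanned by the eigenvector of $\widehat{\mathcal{C}}$ padded with zeros on the $\mathcal{N}'$-block; as $\widehat{\mathcal{C}}\oplus\mathcal{N}'$ is normal and $\zeta$ is one of its simple eigenvalues, $\zeta\in\sigma(\mathcal{C}(\alpha_0,\dots,\alpha_{n-1},b_n))$ would force that padded vector to be an eigenvector of $\mathcal{C}(\alpha_0,\dots,\alpha_{n-1},b_n)$, which is impossible since its components are all nonzero. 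Hence $A=\sigma(\mathcal{N})\cap\sigma(\widehat{\mathcal{C}})$, which in particular bounds the number of elements of $A$ by $\min\{m+1,n-m\}$; the alternative expression $A=C\cap\sigma(\widehat{\mathcal{C}})$ follows by identifying $\sigma(\mathcal{N})$ with $\{\zeta\in\mathbb{S}^1:b_m(\zeta)=b_m\}$, obtained by iterating the recursion \eqref{def} and matching it with $\det(z\mathcal{I}-\mathcal{N})$. Finally, restricting to the orthogonal complement of the common eigenspace carried over $A$ by both $\mathcal{C}(\alpha_0,\dots,\alpha_{n-1},b_n)$ and $\widehat{\mathcal{C}}\oplus\mathcal{N}'$ — on which both act as multiplication by the elements of $A$ — the perturbation is still of rank one and non-degenerate, so Lemma \ref{AG} applied to the reduced pair, whose spectra are $\sigma(\mathcal{C}(\alpha_0,\dots,\alpha_{n-1},b_n))\setminus A$ and $\sigma(\widehat{\mathcal{C}})\cup B$, gives their strict interlacing on $\mathbb{S}^1$.

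The main obstacle is the parity bookkeeping in $\mathcal{C}=\mathcal{L}\mathcal{M}$: the four cases according to $n$ and $m$ being even or odd must be carried through when locating the seam index $k(m)$ in part (i) and the side on which the unitary factor $W$ acts in part (ii). The one genuinely non-routine point is the translation between the matrix $\mathcal{N}$ and the scalar recursion \eqref{def} needed for the alternative description of $A$; once the decoupling identity and this translation are in place, every interlacing assertion reduces to a direct application of Lemma \ref{AG}.
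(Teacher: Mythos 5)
Your part (i) is essentially the paper's own argument: the identity $\Theta(\beta\alpha)=\diag(\overline{\beta},1)\,\Theta(\alpha)\,\diag(1,\beta)$ reduces $\mathcal{C}_m^\beta$, up to diagonal unitary similarity, to $\mathcal{C}$ times a diagonal unitary differing from $\mathcal{I}$ in one entry, and strictness comes from the nonvanishing of all eigenvector components (this is exactly Lemma \ref{ideafinal} and Lemma \ref{Laux}(i)). For part (ii) you use the same decoupling identity as the paper --- $\gamma_m$ in \eqref{auxgamma} is chosen so that $\Theta_m-\diag(\overline{b}_m,\gamma_m)$ has rank one, whence $\mathcal{C}(\alpha_0,\dots,\alpha_{n-1},b_n)$ equals $\mathcal{C}(\alpha_0,\dots,\alpha_{m-1},b_m)\oplus\mathcal{N}$ (or its transpose) times a unitary rank-one perturbation of $\mathcal{I}$ --- but from there you genuinely diverge. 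The paper identifies the common eigenvalues via the Thompson--McEnteggert adjugate computation in the second half of Lemma \ref{AG}, whose hypotheses are supplied by Lemma \ref{Laux}(ii) (the principal submatrices $\mathcal{C}_{11},\mathcal{C}_{22}$ have no unimodular eigenvalues); you replace all of this by a direct eigenvector argument resting only on the fact that eigenvectors of a matrix of the form \eqref{lm} have no zero component. That substitution is attractive and can be made to work, but as written it has a hole: the assertion that a simple eigenvalue $\zeta$ of $M:=\widehat{\mathcal{C}}\oplus\mathcal{N}'$ shared with $\mathcal{C}=WM$ ``forces the padded vector to be an eigenvector of $\mathcal{C}$'' is not immediate. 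Writing $\mathcal{C}-M=uv^{*}$ and using normality of both matrices one gets $(y^{*}u)(v^{*}x)=0$ for the respective eigenvectors $y$ and $x$; one must then run the two cases ($v^{*}x=0$ gives $x\in\ker(M-\zeta)=\mathrm{span}(y)$, while $y^{*}u=0$ gives $\mathcal{C}^{*}y=\overline{\zeta}y$) to conclude in either case that the padded $y$ is an eigenvector of $\mathcal{C}$. Without this dichotomy the step is unjustified. You also need the symmetric statement for $\zeta\in\sigma(\mathcal{N})\setminus\sigma(\widehat{\mathcal{C}})$ to get the disjointness that upgrades the weak interlacing of the deflated pair (the first sentence of Lemma \ref{AG} only gives closed-arc interlacing) to strict interlacing.

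The more substantive gap is the description $A=C\cap\sigma(\widehat{\mathcal{C}})$ with $C=\{\zeta:b_m(\zeta)=b_m\}$. You propose to obtain it by proving $\sigma(\mathcal{N})=C$, ``matching'' the recursion \eqref{def} with $\det(z\mathcal{I}-\mathcal{N})$, and you yourself flag this as the non-routine point --- but no argument is given, and this identity is not something the paper ever establishes or needs. The paper's route (Lemma \ref{aux}) is different: the Schur complement of $\zeta\mathcal{I}-\mathcal{C}_{22}$ in $\zeta\mathcal{I}-\mathcal{C}(\alpha_0,\dots,\alpha_{n-1},b_n)$ is $\zeta\mathcal{I}-\mathcal{C}(\alpha_0,\dots,\alpha_{m-1},b_m(\zeta))$, yielding the determinant factorization \eqref{equa}; combined with Lemma \ref{Laux}(ii) (so that $\det(\zeta\mathcal{I}-\mathcal{C}_{22})\neq0$ on $\mathbb{S}^1$) and the fact that $\mathcal{C}(\alpha_0,\dots,\alpha_{m-1},\nu)$ and $\mathcal{C}(\alpha_0,\dots,\alpha_{m-1},\zeta)$ share no eigenvalues for $\nu\neq\zeta$, this gives directly that the common eigenvalues are exactly the solutions of $b_m(\zeta)=b_m$ lying in $\sigma(\widehat{\mathcal{C}})$, together with the bound $\min\{m+1,n-m\}$. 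You should either supply a proof that $\sigma(\mathcal{N})=C$ or, more economically, adopt the Schur complement identity, which is where the recursion \eqref{def} actually enters the paper's proof.
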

Let $P_{n+1}$ be the POPUC of degree $n+1$ associated to the array $(0,\dots,0,1)$. Since $\mathcal{C}(0,\dots,0,1)$ is a permutation matrix, it follows that $P_{n+1}(z)=z^{n+1}-1$. The sequence $(P_j)_{j\geq 1}$ (all of whose zeros are roots of unity) produce, by geometric intuition, illuminating examples that fall within Theorem \ref{1}.

\begin{eje}\label{permutation}
 Let $P_3$ and $P_6$ be the POPUC associated to the arrays $(0,0,1)$ and $(0,0,0,0,0,1)$, respectively. In this situation,
  $$ \mathcal{C}(0,0,1)=\begin{pmatrix}
  0 & 0 & 1\\
  1 & 0 & 0\\
  0 & 1 & 0
  \end{pmatrix}, \quad
 \mathcal{C}(0,0,0,0,0,1)=\begin{pmatrix}
 0 & 0 & 1 & 0 & 0 & 0 \\
 1 & 0 & 0 & 0 & 0 & 0\\
 0 & 0  & 0 & 0 & 1 & 0\\
0 & 1 & 0 & 0 & 0 & 0\\
 0 & 0 & 0 & 0 & 0 & 1\\
 0 & 0 & 0 & 1 & 0 & 0\\
 \end{pmatrix}\,,
$$
and, therefore,
\begin{align*}
\sigma( \mathcal{C}(0,0,1))=\left\{1, e^{\pm i 2 \pi/3} \right\}, \quad \sigma( \mathcal{C}(0,0,0,0,0,1))=\left\{\pm1, \dps e^{\pm i 2 \pi/3}, \dps e^{\pm i \pi/3} \right\}\, .
\end{align*}
In the notation of Theorem \ref{1} we have $n=5$, $m=2$, $b_j(\zeta)=\zeta^{5-j}$ $(0\leq j \leq5)$,
${A}={C}=\sigma( \mathcal{C}(0,0,1))$, and ${B}= \emptyset$, where ${A}$ is obtained by using any of the expressions outlined in Theorem \ref{1}. Clearly, $\mathcal{C}(0,0,0,0,0,1)$ and $\mathcal{C}(0,0,1)$ have ${A}$ as the set of common eigenvalues and the elements of the sets $\sigma(\mathcal{C}(0,0,0,0,0,1)) \setminus {A}$  and $\sigma( \mathcal{C}(0,0,1))$ strictly interlace on $\mathbb{S}^1$,  in concordance with sentence (ii) of Theorem \ref{1}.
\end{eje}

Regarding Theorem \ref{1}, as far as we know, sentence (i) for $m=0$ was proved by Ammar, Gragg and Reichel \cite[Proposition $4.2$]{AGR88}, although the particular case $\beta=-1$ is known since Geronimus' work \cite[Theorem IV]{G44} (cf. \cite[Theorem\,III]{G46}). The sentence (i) for $m=n$ was proved by Bohnhorst in \cite[Theorem $3.19$]{B93} (cf. \cite[Theorem $3.5$]{BBF00}). In  \cite[Theorem $3.4$]{S07}, Simon proved a weaker version of sentence (ii) that reads as follows:  {\em Strictly between any pair of eigenvalues of $\mathcal{C}(\alpha_0,\dots,\alpha_{m-1},b_m)$ there is at least one eigenvalue of $\mathcal{C}(\alpha_0,\dots,\alpha_{n-1},b_n)$}.

\begin{coro}\label{caso}
Let $\mathcal{C}(\alpha_0,\dots,\alpha_{n-1},b_n)$ be a matrix given by \eqref{lm}, where $\alpha_j \in \de$ ($j=0,1,\dots,n-1$) and $b_n \in \mathbb{S}^1$. Let $b_{n-1} \in \mathbb{S}^1$ and define $\gamma_{n-1}$ as in \eqref{auxgamma} for $m=n-1$. Then $\mathcal{C}(\alpha_0,\dots,\alpha_{n-1},$ $b_{n}) $ and $\mathcal{C}(\alpha_0,\dots,\alpha_{n-2},b_{n-1})$ have at most one common eigenvalue. More precisely, either $\mathcal{C}(\alpha_0,\dots,\alpha_{n-1},b_n)$ and $\mathcal{C}(\alpha_0,\dots,\alpha_{n-2},b_{n-1})$ have $\overline{b}_n \gamma_{n-1}$ as (only) common eigenvalue and the elements of $\sigma\big(\mathcal{C}(\alpha_0,\dots,\alpha_{n-1},b_n)\big) \setminus \{\overline{b}_n \gamma_{n-1}\}$  and $\sigma\big(\mathcal{C}(\alpha_0,\dots,\alpha_{n-2},$ $b_{n-1})\big)$ strictly interlace on $\mathbb{S}^1$, or else  $\mathcal{C}(\alpha_0,\dots,$ $\alpha_{n-1},b_n)$ and $\mathcal{C}(\alpha_0,\dots,\alpha_{n-2},b_{n-1})$ have no common eigenvalues, and in such case $\overline{b}_n \gamma_{n-1}$ is not an eigenvalue of either, and the elements of the sets $\sigma\big(\mathcal{C}(\alpha_0,$ $\dots,\alpha_{n-1},$ $b_n)\big)$  and $\sigma\big(\mathcal{C}(\alpha_0,$ $\dots, \alpha_{n-2},b_{n-1})\big) \cup \{\overline{b}_n \gamma_{n-1}\}$ strictly interlace on $\mathbb{S}^1$.
\end{coro}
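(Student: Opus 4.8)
The plan is to deduce the corollary from sentence (ii) of Theorem \ref{1} in the particular case $m = n-1$ (with $b_m := b_{n-1}$; note $0 \le n-1 < n$ once $n \ge 1$), so the first task is to evaluate the objects occurring there in this degenerate situation. Since $m+1 = n$, the array $(\alpha_{m+1},\dots,\alpha_{n-1},b_n)$ has no $\alpha$-entries, and hence, reading \eqref{lm} with an empty chain of $\Theta$-blocks, $\mathcal{C}(\alpha_{m+1},\dots,\alpha_{n-1},b_n)$ is the $1$-by-$1$ matrix $\overline{b}_n$; similarly $\mathcal{D} = \diag(\gamma_{n-1},\mathcal{I})$ collapses to the $1$-by-$1$ matrix $\gamma_{n-1}$, where $\gamma_{n-1}$ is given by \eqref{auxgamma} with $m = n-1$. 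Therefore $\mathcal{N} = \overline{b}_n\gamma_{n-1}$ and $\sigma(\mathcal{N}) = \{\overline{b}_n\gamma_{n-1}\}$.

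With this computation in hand I would apply Theorem \ref{1}(ii). The alternative description $A = \sigma(\mathcal{N})\cap\sigma(\mathcal{C}(\alpha_0,\dots,\alpha_{n-2},b_{n-1}))$ shows that $A = \{\overline{b}_n\gamma_{n-1}\}$ when $\overline{b}_n\gamma_{n-1}$ is an eigenvalue of $\mathcal{C}(\alpha_0,\dots,\alpha_{n-2},b_{n-1})$ and $A = \emptyset$ otherwise, while $B = \sigma(\mathcal{N})\setminus A$ is $\emptyset$ in the first case and $\{\overline{b}_n\gamma_{n-1}\}$ in the second. In the first case Theorem \ref{1}(ii) yields at once that $A = \{\overline{b}_n\gamma_{n-1}\}$ is the set of common eigenvalues of $\mathcal{C}(\alpha_0,\dots,\alpha_{n-1},b_n)$ and $\mathcal{C}(\alpha_0,\dots,\alpha_{n-2},b_{n-1})$, and that the elements of $\sigma(\mathcal{C}(\alpha_0,\dots,\alpha_{n-1},b_n))\setminus\{\overline{b}_n\gamma_{n-1}\}$ and $\sigma(\mathcal{C}(\alpha_0,\dots,\alpha_{n-2},b_{n-1}))$ strictly interlace on $\mathbb{S}^1$ (recall $B = \emptyset$); this is the first alternative of the corollary. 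In the second case it yields that the two matrices have $A = \emptyset$ as set of common eigenvalues, i.e. no common eigenvalue, and that the elements of $\sigma(\mathcal{C}(\alpha_0,\dots,\alpha_{n-1},b_n))$ and $\sigma(\mathcal{C}(\alpha_0,\dots,\alpha_{n-2},b_{n-1}))\cup\{\overline{b}_n\gamma_{n-1}\}$ strictly interlace on $\mathbb{S}^1$.

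To complete the second alternative I still have to verify that $\overline{b}_n\gamma_{n-1}$ is an eigenvalue of neither matrix. That it is not an eigenvalue of $\mathcal{C}(\alpha_0,\dots,\alpha_{n-2},b_{n-1})$ is the assumption defining this case. For $\mathcal{C}(\alpha_0,\dots,\alpha_{n-1},b_n)$ I would invoke the elementary fact that two finite subsets of $\mathbb{S}^1$ having the same cardinality and strictly interlacing must be disjoint: a set with $N$ elements divides $\mathbb{S}^1$ into $N$ open arcs, each of which must contain at least one element of the other set, so a point shared by both sets (lying in the interior of none of those arcs) would leave only $N-1$ elements to meet $N$ arcs, which is impossible. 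Since the eigenvalues of every matrix of the form \eqref{lm} are simple, in the second case both interlacing sets have exactly $n+1$ elements, so $\overline{b}_n\gamma_{n-1}$, which lies in the second, cannot lie in $\sigma(\mathcal{C}(\alpha_0,\dots,\alpha_{n-1},b_n))$. Lastly, the assertion "at most one common eigenvalue" is nothing but $\min\{m+1,n-m\} = 1$ for $m = n-1$.

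I do not expect a genuine obstacle: the corollary is essentially a restatement of Theorem \ref{1}(ii) in this case. The only places calling for care are the degenerate bookkeeping that identifies $\mathcal{N}$ with the scalar $\overline{b}_n\gamma_{n-1}$ and the short disjointness remark needed to establish, in the second alternative, that $\overline{b}_n\gamma_{n-1}$ is an eigenvalue of neither matrix.
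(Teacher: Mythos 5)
Your proof is correct and follows essentially the same route as the paper's: specialize Theorem \ref{1}(ii) to $m=n-1$, identify $\sigma(\mathcal{N})=C=\{\overline{b}_n\gamma_{n-1}\}$, and split into two cases according to whether this point lies in $\sigma\big(\mathcal{C}(\alpha_0,\dots,\alpha_{n-2},b_{n-1})\big)$. The only (harmless) difference is that you spell out the small disjointness argument showing that, in the second alternative, $\overline{b}_n\gamma_{n-1}$ is not an eigenvalue of $\mathcal{C}(\alpha_0,\dots,\alpha_{n-1},b_n)$ either, a point the paper treats as immediate.
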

\begin{proof}
Take $m=n-1$ in Theorem \ref{1}. Hence, \eqref{def} and \eqref{auxgamma} yield ${C}=\{\overline{b}_n \gamma_{n-1}\}$ which, in turn, is equal to $\sigma(\mathcal{N})$. Then either ${A}={C}$ and ${B}=\emptyset$ if $\overline{b}_n \gamma_{n-1} \in \sigma\big(\mathcal{C}(\alpha_0,\dots,\alpha_{n-2},b_{n-1})\big)$, or else ${A}=\emptyset$ and ${B}={C}$ otherwise. The result follows immediately from sentence (ii) of Theorem \ref{1}.
\end{proof}

Corollary \ref{caso} was proved by Bohnhorst \cite[p. $48$]{B93} (cf. \cite[p. $819$]{BBF00}) and rediscovered by Simon \cite[Theorem $1.4$]{S07}. It is worth noting that in view of Corollary \ref{caso} and besides the several and well-known practical consequences, POPUC answered the following open-ended question proposed by Tur\'an as far as 1974 \cite[Problem LXVI, p. 60]{T74o}:  {\em ``It is known that the zeros of the $n$th orthogonal polynomial (with respect to a Lebesgue-integral function on an interval) separate the zeros of the $(n+1)$th polynomial. What corresponds to this fact  on the unit circle?"}\footnote{We quote the English translation provided by Sz\"usz \cite[Problem LXVI]{T80}.}.

\section{Proof of Theorem \ref{1}}
\subsection{Some preliminary lemmas}
Theorem \ref{1} will be proved through the following sequence of lemmas. 

\begin{lemma}\label{AG}
Let $\mathcal{U}$ and $\mathcal{S}$ be unitary matrices of the same order and suppose that
$
\rank \ (\mathcal{I}-\mathcal{S})=1\,.
$
Then $\mathcal{U}$ and  $\mathcal{US}$ have interlacing eigenvalues on $\mathbb{S}^1$.  Moreover, assume that $\mathcal{U}\mathcal{S}$ admits a decomposition 
 $
 \mathcal{U}\mathcal{S}=\mathcal{U}_1 \, \oplus \, \mathcal{U}_2\,,
 $
and let $\mathcal{U}$ be  partitioned as
\begin{align*}
\mathcal{U}=\begin{pmatrix}
\mathcal{U}_{11} & \mathcal{U}_{12}\\
\mathcal{U}_{21} & \mathcal{U}_{22}
\end{pmatrix}\, ,
\end{align*}
$\mathcal{U}_{11}$ and $\mathcal{U}_{1}$ being of the same order. Set $U_1:=\sigma(\mathcal{U}_1)$, $U_2:=\sigma(\mathcal{U}_2)$, and  $U:=\sigma(\mathcal{U})$. Assume further that the eigenvalues of $\mathcal{U}_{1}$ and  $\mathcal{U}_{2}$ are simple and $\sigma(\mathcal{U}_{11}) \, \cap \, U_1=\sigma(\mathcal{U}_{22}) \, \cap \, U_2=\emptyset$. Then the elements of the sets $U \setminus \big(U_1 \, \cap \, U_2 \big)$  and $U_1 \, \cup \, \big(U_2 \backslash (U_1 \, \cap \, U_2)\big)$ strictly interlace on $\mathbb{S}^1$.

 \end{lemma}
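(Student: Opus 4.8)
The plan is to trace everything back to the elementary analysis of a single monotone rational function on $\mathbb{S}^1$. Let $N$ denote the common order of the matrices.

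\textbf{Step 1: the secular equation.} Since $\mathcal S$ is unitary with $\rank(\mathcal I-\mathcal S)=1$, a direct computation gives $\mathcal S=\mathcal I+(\lambda-1)uu^{*}$ for a unit vector $u$ and some $\lambda\in\mathbb{S}^1\setminus\{1\}$; hence $\mathcal S^{-1}=\mathcal I+(\overline\lambda-1)uu^{*}$ and $\mathcal U\mathcal S=\mathcal U+(\lambda-1)(\mathcal Uu)u^{*}$ is a rank-one perturbation of $\mathcal U$. By the matrix determinant lemma, $\det(z\mathcal I-\mathcal U\mathcal S)=\det(z\mathcal I-\mathcal U)\,\big(1-(\lambda-1)\,u^{*}(z\mathcal I-\mathcal U)^{-1}\mathcal Uu\big)$. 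Writing, for any unitary $\mathcal V$, $\det(e^{i\phi}\mathcal I-\mathcal V)=e^{iN\phi/2}c_{\mathcal V}f_{\mathcal V}(\phi)$ with $c_{\mathcal V}$ a constant and $f_{\mathcal V}(\phi)=\prod_k\sin\frac{\phi-\tau_k}2$ the real-valued function whose zeros are the eigenvalue-arguments $\tau_k$ of $\mathcal V$, and using $\frac{e^{i\theta}}{e^{i\phi}-e^{i\theta}}=-\tfrac12-\tfrac i2\cot\frac{\phi-\theta}2$ together with the spectral decomposition of $\mathcal U$, the identity above becomes on the circle
\[
\frac{f_{\mathcal U\mathcal S}(\phi)}{f_{\mathcal U}(\phi)}=a+b\,g(\phi),\qquad g(\phi)=\sum_{\theta}m_{\theta}\cot\frac{\phi-\theta}2 ,
\]
the sum running over the distinct eigenvalue-arguments $\theta$ of $\mathcal U$, with weights $m_{\theta}=\|E_{\theta}u\|^{2}\ge0$, $\sum_{\theta}m_{\theta}=1$, and $a,b$ real constants (reality is forced since the left side is real and $g$ is a nonconstant real function) with $|b|=|\lambda-1|/2>0$. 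The decisive property is that $g$ has a simple pole exactly at each $\theta$ with $m_{\theta}>0$ and is strictly decreasing from $+\infty$ to $-\infty$ on each open arc between two consecutive such poles. The first assertion of the lemma follows: $a+bg$ has exactly one zero on each such arc, while at a pole of $g$ the product $f_{\mathcal U}\cdot g$ stays finite and nonzero to leading order, so the eigenvalue-arguments of $\mathcal U$ and of $\mathcal U\mathcal S$ interlace; the degenerate cases (some $m_\theta=0$, or $\mathcal U$ with repeated eigenvalues) are absorbed by a continuity argument, approximating $\mathcal U$ by unitary matrices for which $u$ is cyclic.

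\textbf{Step 2: block structure and cyclicity.} For the ``Moreover'' part, partition $u=(u',u'')$ along the splitting $\mathcal U\mathcal S=\mathcal U_1\oplus\mathcal U_2$. From $\mathcal U=(\mathcal U_1\oplus\mathcal U_2)\mathcal S^{-1}$ and $\mathcal S^{-1}=\mathcal I+(\overline\lambda-1)uu^{*}$ one reads off the blocks of $\mathcal U$, in particular
\[
\mathcal U_{11}=\mathcal U_1+(\overline\lambda-1)\mathcal U_1u'u'^{*},\qquad \mathcal U_{12}=(\overline\lambda-1)\mathcal U_1u'u''^{*},\qquad \mathcal U_{21}=(\overline\lambda-1)\mathcal U_2u''u'^{*}.
\]
Thus $\mathcal U_{11}$ is a rank-one perturbation of $\mathcal U_1$, and the secular equation of Step 1 applied to this pair gives $\det(z\mathcal I-\mathcal U_{11})=\det(z\mathcal I-\mathcal U_1)\big(1-(\overline\lambda-1)u'^{*}(z\mathcal I-\mathcal U_1)^{-1}\mathcal U_1u'\big)$, whose second factor, expanded over the simple spectrum $U_1$ of $\mathcal U_1$, has poles precisely at those $\mu\in U_1$ with $\langle v_{\mu},u'\rangle\neq0$, $v_\mu$ the eigenvector for $\mu$. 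The hypothesis $\sigma(\mathcal U_{11})\cap U_1=\emptyset$ says these poles are not cancelled, i.e.\ $\langle v_{\mu},u'\rangle\neq0$ for every $\mu\in U_1$: the vector $u'$ is cyclic for $\mathcal U_1$. Symmetrically $u''$ is cyclic for $\mathcal U_2$, and in particular $u',u''\neq0$.

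\textbf{Step 3: common eigenvalues and simplicity of $\mathcal U$.} Set $A:=U_1\cap U_2$. Since $\mathcal U\mathcal S=\mathcal U_1\oplus\mathcal U_2$ has each $\mu\in A$ as a double eigenvalue while $\mathcal U$ differs from it by a rank-one matrix (nullity drops by at most one), $A\subseteq\sigma(\mathcal U)$. The heart of the proof is the converse together with simplicity of $\mathcal U$. First, by cyclicity, for $\mu\in U_1$ the column vector $(v_\mu,0)$ is \emph{never} an eigenvector of $\mathcal U$: the second block of $\mathcal U\,(v_\mu,0)$ equals $\mathcal U_{21}v_\mu=(\overline\lambda-1)\langle v_\mu,u'\rangle\,\mathcal U_2u''\neq0$ (cyclicity, $u''\neq0$, $\mathcal U_2$ invertible); likewise $(0,w_\mu)$ is never an eigenvector of $\mathcal U$. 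Second, for $\mu\in U_1\cup U_2$ one has $\mathcal U\mathcal Su\notin\mathrm{range}(\mathcal U\mathcal S-\mu\mathcal I)$, since by normality of $\mathcal U\mathcal S$ this amounts to $\langle\mathcal U\mathcal Su,(v_\mu,0)\rangle=\mu\langle u',v_\mu\rangle\neq0$ (or its analogue with $(0,w_\mu)$). Now suppose $\mu$ is a common eigenvalue of $\mathcal U$ and $\mathcal U\mathcal S$ with $\mu\notin A$, or $\mu$ is a repeated eigenvalue of $\mathcal U$ (in which case $\mu\in\sigma(\mathcal U\mathcal S)=U_1\cup U_2$ again by the rank-one bound). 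For any $x\in\ker(\mathcal U-\mu\mathcal I)$, $\mathcal U\mathcal Sx=\mu x+(\lambda-1)(u^{*}x)\mathcal Uu$, and using $\mathcal Uu=\overline\lambda\,\mathcal U\mathcal Su$ this reads $(\mathcal U\mathcal S-\mu\mathcal I)x=(1-\overline\lambda)(u^{*}x)\,\mathcal U\mathcal Su$; the left side lies in the range and the right side in the line spanned by $\mathcal U\mathcal Su$, which meets the range only in $0$, so $u^{*}x=0$ and hence $x\in\ker(\mathcal U\mathcal S-\mu\mathcal I)$. Thus $\ker(\mathcal U-\mu\mathcal I)\subseteq\ker(\mathcal U\mathcal S-\mu\mathcal I)$; a dimension count (using simplicity of $\mathcal U_1,\mathcal U_2$) then forces $(v_\mu,0)$ or $(0,w_\mu)$ into $\ker(\mathcal U-\mu\mathcal I)$, contradicting the first point. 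Therefore $\mathcal U$ has simple spectrum and $\sigma(\mathcal U)\cap\sigma(\mathcal U\mathcal S)=A=U_1\cap U_2$.

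\textbf{Step 4: conclusion, and the main obstacle.} Because $\mathcal U$ is simple, $|\sigma(\mathcal U)|=N$, and (once more from the secular equation) $m_{\theta}>0$ precisely at the $N-|A|$ eigenvalue-arguments of $\sigma(\mathcal U)\setminus A$ and nowhere else, so $g$ has exactly these $N-|A|$ poles. By Step 1, $f_{\mathcal U\mathcal S}/f_{\mathcal U}=a+bg$ ($b\neq0$) is strictly monotone onto $\mathbb{R}$ on each of the $N-|A|$ open arcs between consecutive poles, hence has exactly one zero on each; a count of orders of zeros of $f_{\mathcal U\mathcal S}$ against $f_{\mathcal U}$ identifies these $N-|A|$ zeros with the eigenvalue-arguments of $\sigma(\mathcal U\mathcal S)=U_1\cup(U_2\setminus A)$, which are disjoint from the poles (the factor corresponding to the common eigenvalues $A$ has been divided out). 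Hence the arguments of $\sigma(\mathcal U)\setminus A$ and of $U_1\cup\big(U_2\setminus(U_1\cap U_2)\big)$ strictly alternate on $\mathbb{S}^1$, which is precisely the asserted strict interlacing. The routine parts are Step 1 (the secular computation and the monotonicity of $g$) and Step 4 (the counting). The real work is Step 3: one must extract from the innocuous-looking hypotheses $\sigma(\mathcal U_{ii})\cap U_i=\emptyset$ — via the cyclicity they encode in Step 2 — both the identification of the common spectrum with $U_1\cap U_2$ and the simplicity of $\mathcal U$, the latter being exactly what upgrades the interlacing from weak to strict and collapses the bookkeeping of multiplicities.
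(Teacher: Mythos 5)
Your proof is correct, but it follows a genuinely different and more self-contained route than the paper's. The paper takes the weak interlacing of $\sigma(\mathcal U)$ and $\sigma(\mathcal U\mathcal S)$ as known (Arbenz--Golub) and reduces the strict statement to a single disjointness claim, $U_1\cap U_2=U_1\cap U=U_2\cap U$; this is proved by evaluating the rank-one perturbation identity $\chi_{\mathcal U}(\zeta)=\chi_{\mathcal U\mathcal S}(\zeta)+v^T\adj(\zeta\mathcal I-\mathcal U\mathcal S)\,u$ at the eigenvalues $\lambda_j$ of $\mathcal U\mathcal S$ via the Thompson--McEnteggert adjugate formula, the hypotheses $\sigma(\mathcal U_{11})\cap U_1=\sigma(\mathcal U_{22})\cap U_2=\emptyset$ entering exactly as in your Step 2 (an eigenvector of $\mathcal U_1\oplus\mathcal U_2$ orthogonal to the perturbation would be an eigenvector of $\mathcal U_{11}$ or $\mathcal U_{22}$). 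You instead expand the perturbation determinant in the opposite direction, reduce everything to the Cauchy-type secular function $a+bg$ with $g$ a strictly monotone sum of cotangents, and then prove from scratch the simplicity of $\sigma(\mathcal U)$, the identification $\sigma(\mathcal U)\cap\sigma(\mathcal U\mathcal S)=U_1\cap U_2$, and the strict interlacing by counting zeros of $a+bg$ between consecutive poles. Your route is longer but buys more: it makes explicit both the simplicity of $\mathcal U$ and the final counting step (``weak interlacing plus disjointness of the reduced sets implies strict interlacing''), which the paper leaves implicit, and it does not need to import the Arbenz--Golub theorem. The only soft spots are cosmetic: the continuity argument for the degenerate cases of the weak statement in Step 1 is only sketched (but that is precisely the part the paper also cites rather than proves), and your constant $b$ should carry the harmless nonzero factor $c_{\mathcal U}/c_{\mathcal U\mathcal S}$ --- what matters is only that $b\neq 0$.
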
\label{lemma}

\begin{proof}
The first sentence of the lemma is the simplest form of a result due to Arbenz and Golub \cite[Section $6$]{AGo88} (cf. \cite[Theorem $2.9$]{B93} and \cite[Theorem $2.7$]{BBF00})\footnote{It can be deduced directly using \cite[p. $222$]{MMRR14} and \cite[Corollary  $4.3.9$]{HJ}.}.
In order to deduce the second one, we first claim that
\begin{align}\label{claim}
U_1 \, \cap \, U_2=U_1 \, \cap \, U\,=U_2 \, \cap \, U\,.
\end{align}
Indeed,  since $\rank \, (\mathcal{US}-\mathcal{U})=1$, there exist nonzero vectors $u, v \in \co^{n}$ ($n$ being the common order of $\mathcal{U}$ and $\mathcal{S}$) such that
$
\mathcal{US}=\mathcal{U}+u v^T
$. Using the formula for the determinant of a rank one perturbation (cf. \cite[Proposition $3.21$]{S10}), we may write for each $\zeta \in \co$ \footnote{$\chi_{_\mathcal{A}}$ denotes the characteristic polynomial of $\mathcal{A}$.}
\begin{align}\label{deteq}
\chi_{_\mathcal{U}}(\zeta)=\chi_{_\mathcal{US}}(\zeta)+v^T \adj (\zeta \mathcal{I}-\mathcal{US}) u\,.
\end{align}
Let $\mathcal{US}=\mathcal{Z} \Lambda \mathcal{Z}^*$ be the spectral decomposition of $\mathcal{US}$ in which $\Lambda=\diag \,(\lambda_1, \dots,\lambda_n)$ and $\mathcal{Z}=(z_1 \dots z_n)$. Thompson-McEnteggert's formula for the adjugate \cite{TM68} (cf. \cite[Theorem $2.1$]{S79}) gives
\begin{align}\label{th}
\adj (\lambda_j \mathcal{I}-\mathcal{US})=\chi'_{_\mathcal{US}}(\lambda_j) z_j z_j^*\, ,
\end{align}
where the prime denotes the derivative. Combining \eqref{deteq} with \eqref{th} yields\footnote{The eigenvalue interlacing already stated implies $U_1 \, \cap \, U_2 \subseteq U$, and so $U_1 \, \cap \, U_2 \subseteq U_1 \, \cap \, U$ and  $U_1 \, \cap \, U_2 \subseteq U_2 \, \cap \, U$.}
\begin{align}\label{equal}
\chi_{_\mathcal{U}}(\lambda_j)=\left( \chi'_{_{\mathcal{U}_1}}(\lambda_j) \chi_{_{\mathcal{U}_2}}(\lambda_j)+\chi_{_{\mathcal{U}_1}}(\lambda_j) \chi'_{_{\mathcal{U}_2}}(\lambda_j)\right) \, z_j^* u v^T z_j\, .
\end{align} 
We next claim that if $\lambda_j \in (U_1-U_2) \, \cup \, (U_2-U_1)$\footnote{Given a set $E$ and $F,G\subseteq E$, we define $F-G:=F \cap (E \backslash G)$; if $G\subseteq F$, then $F-G=F\backslash G$.}, then $ z_j^* u v^T z_j\not = 0$.  We only prove that $\lambda_j \in U_1-U_2$ implies $v^T z_j \not = 0$. (To prove that  $\lambda_j \in U_1-U_2$ implies $z_j^* u \not = 0$, we proceed similarly, as well as for proving that $\lambda_j \in U_2-U_1$ implies $ z_j^* u v^T z_j\not = 0$.) Indeed, suppose that $\lambda_j \in U_1-U_2$ and $v^T z_j = 0$. Since there is a normalized eigenvector $v_{j}$ of $\mathcal{U}_1$ associated with $\lambda_j$ such that $z_j=(v_{j}^T, 0, \dots, 0)^T$, we deduce  
$$
\mathcal{U}_{11}  \, v_{j}=\lambda_j v_{j}\,,
$$
hence $\lambda_j \in \sigma(\mathcal{U}_{11}) \, \cap \, U_1$, contrary to $ \sigma(\mathcal{U}_{11}) \, \cap \, U_1=\emptyset$. Consequently, \eqref{claim} follows from \eqref{equal}. Finally, it follows from \eqref{claim} that the sets $U \setminus \big(U_1 \, \cap \, U_2 \big)$  and $U_1 \, \cup \, \big(U_2 \backslash (U_1 \, \cap \, U_2)\big)$ have no common elements, thus the second sentence of the lemma follows from the first one.
\end{proof}

\begin{lemma}\label{ideafinal}
Let $\mathcal{U}$ be a unitary matrix and for a fixed $k$ let $\mathcal{S}$ be the diagonal matrix obtained from the identity matrix by replacing the $(k,k)$ entry with a number on $\mathbb{S}^1\setminus \{1\}$. Assume that $\mathcal{U}$ and $\mathcal{S}$ have the same order. Assume further that the eigenvalues of $\mathcal{U}$ are simple and all its eigenvectors have a nonzero component at the position $k$. Then $\mathcal{U}$ and  $\mathcal{U}\mathcal{S}$ have strictly interlacing eigenvalues on $\mathbb{S}^1$.
\end{lemma}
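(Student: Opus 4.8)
The plan is to derive Lemma \ref{ideafinal} directly from Lemma \ref{AG}. The key observation is that $\mathcal{S}$ — the identity with its $(k,k)$ entry replaced by some $\beta\in\mathbb{S}^1\setminus\{1\}$ — is unitary and satisfies $\rank(\mathcal{I}-\mathcal{S})=1$, since $\mathcal{I}-\mathcal{S}$ has a single nonzero entry $1-\beta\neq0$ in position $(k,k)$. Hence the first sentence of Lemma \ref{AG} immediately gives that $\mathcal{U}$ and $\mathcal{U}\mathcal{S}$ have interlacing eigenvalues on $\mathbb{S}^1$. The work is therefore to upgrade \emph{interlacing} to \emph{strict interlacing}, and for that I would invoke the second sentence of Lemma \ref{AG} with the trivial decomposition $\mathcal{U}\mathcal{S}=\mathcal{U}_1\oplus\mathcal{U}_2$ in which $\mathcal{U}_1:=\mathcal{U}\mathcal{S}$ and $\mathcal{U}_2$ is the empty (zero-by-zero) block, so that $U_2=\emptyset$ and the conclusion of Lemma \ref{AG} collapses to: the elements of $U\setminus(U_1\cap\emptyset)=U$ and $U_1\cup\emptyset=U_1$ strictly interlace, i.e. $\sigma(\mathcal{U})$ and $\sigma(\mathcal{U}\mathcal{S})$ strictly interlace.

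To legitimately apply that second sentence I must check its hypotheses. First, the eigenvalues of $\mathcal{U}_1=\mathcal{U}\mathcal{S}$ must be simple. Since $\mathcal{U}$ has simple eigenvalues by hypothesis and $\mathcal{U}\mathcal{S}$ is a unitary rank-one perturbation of $\mathcal{U}$, its eigenvalues are also simple — this follows either from the interlacing already established or, more concretely, from the determinant identity \eqref{equal}-style reasoning in the proof of Lemma \ref{AG} (a unitary rank-one perturbation of a unitary matrix with simple spectrum again has simple spectrum). The hypothesis on $\mathcal{U}_2$ is vacuous. The remaining condition is $\sigma(\mathcal{U}_{11})\cap U_1=\emptyset$, where $\mathcal{U}_{11}$ is the leading principal block of $\mathcal{U}$ of the same order as $\mathcal{U}_1$ — but here $\mathcal{U}_1$ has full order, so $\mathcal{U}_{11}=\mathcal{U}$ and this condition is $\sigma(\mathcal{U})\cap\sigma(\mathcal{U})=\emptyset$, which is false. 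So the plain trivial decomposition does not fit; I need to be more careful about \emph{which} trivial decomposition to use, or rather to re-run the argument of Lemma \ref{AG} in the present setting where $\mathcal{S}$ is not just rank-one-perturbing but is itself essentially diagonal.

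The cleaner route, which I would actually carry out, is to apply Lemma \ref{AG} with the roles arranged so that the hypothesis $\sigma(\mathcal{U}_{22})\cap U_2=\emptyset$ becomes the meaningful one, exploiting the eigenvector condition. Concretely: conjugate everything by the permutation that moves index $k$ to the last position, so without loss of generality $k$ is the last coordinate. Then write $\mathcal{U}\mathcal{S}$ — no, a better formulation: apply the rank-one machinery of the proof of Lemma \ref{AG} (equations \eqref{deteq}--\eqref{equal}) directly to $\mathcal{U}$ and $\mathcal{U}\mathcal{S}$ with $\mathcal{S}=\mathcal{I}+(\beta-1)e_k e_k^T$, so that $\mathcal{U}\mathcal{S}=\mathcal{U}+(\beta-1)(\mathcal{U}e_k)e_k^T$, giving $u=(\beta-1)\mathcal{U}e_k$ and $v=e_k$. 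From \eqref{deteq}, for any $\zeta\in\mathbb{S}^1$, $\chi_{_\mathcal{U}}(\zeta)-\chi_{_{\mathcal{U}\mathcal{S}}}(\zeta)=e_k^T\adj(\zeta\mathcal{I}-\mathcal{U}\mathcal{S})\,(\beta-1)\mathcal{U}e_k$. Using the Thompson--McEnteggert formula \eqref{th} at an eigenvalue $\lambda_j$ of $\mathcal{U}\mathcal{S}$ with normalized eigenvector $z_j$, this becomes $\chi_{_\mathcal{U}}(\lambda_j)=(\beta-1)\chi'_{_{\mathcal{U}\mathcal{S}}}(\lambda_j)\,(e_k^T z_j)\,(z_j^*\mathcal{U}e_k)$; and since $z_j^*\mathcal{U}=z_j^*\mathcal{U}\mathcal{S}\mathcal{S}^{-1}=\bar\lambda_j z_j^*\mathcal{S}^{-1}$, one gets $z_j^*\mathcal{U}e_k=\bar\lambda_j\bar\beta^{-1}\,\overline{(e_k^T z_j)}$, hence $\chi_{_\mathcal{U}}(\lambda_j)=(\beta-1)\bar\lambda_j\bar\beta^{-1}\chi'_{_{\mathcal{U}\mathcal{S}}}(\lambda_j)\,|e_k^T z_j|^2$. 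The eigenvector hypothesis is exactly what is needed at the other end: I must show $e_k^T z_j\neq0$ for every eigenvector $z_j$ of $\mathcal{U}\mathcal{S}$. This is the step I expect to be the main obstacle, and it is where the hypothesis enters in a mildly indirect way: an eigenvector of $\mathcal{U}\mathcal{S}$ with vanishing $k$-th component is fixed by $\mathcal{S}$, hence is also an eigenvector of $\mathcal{U}$ (same eigenvalue); but then it is an eigenvector of $\mathcal{U}$ with zero $k$-th component, contradicting the hypothesis on $\mathcal{U}$. Therefore $\chi_{_\mathcal{U}}(\lambda_j)\neq0$ for every $\lambda_j\in\sigma(\mathcal{U}\mathcal{S})$, so $\sigma(\mathcal{U})\cap\sigma(\mathcal{U}\mathcal{S})=\emptyset$; combined with the (weak) interlacing from the first sentence of Lemma \ref{AG}, disjointness of the two spectra promotes the interlacing to strict interlacing, which completes the proof.

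Thus the skeleton is: (1) note $\rank(\mathcal{I}-\mathcal{S})=1$ and apply the first sentence of Lemma \ref{AG} for weak interlacing; (2) reprise the determinant/adjugate computation of the proof of Lemma \ref{AG} for the specific rank-one perturbation $\mathcal{U}\mathcal{S}=\mathcal{U}+(\beta-1)(\mathcal{U}e_k)e_k^T$; (3) use $\mathcal{S}z_j=z_j$ whenever $e_k^Tz_j=0$ together with the eigenvector hypothesis on $\mathcal{U}$ to rule out $e_k^Tz_j=0$, and hence show $\chi_{_\mathcal{U}}(\lambda_j)\neq0$ for all $\lambda_j\in\sigma(\mathcal{U}\mathcal{S})$; (4) conclude $\sigma(\mathcal{U})\cap\sigma(\mathcal{U}\mathcal{S})=\emptyset$ and upgrade weak to strict interlacing. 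The only genuinely delicate point is step (3), the nonvanishing of the $k$-th component of the eigenvectors of $\mathcal{U}\mathcal{S}$, and it is handled precisely by the hypothesis that all eigenvectors of $\mathcal{U}$ have nonzero $k$-th component.
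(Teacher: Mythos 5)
Your proof is correct and is, at bottom, the paper's own argument: weak interlacing from the first sentence of Lemma \ref{AG}, followed by the rank-one determinant/adjugate computation with the Thompson--McEnteggert formula to show that $\sigma(\mathcal{U})$ and $\sigma(\mathcal{U}\mathcal{S})$ are disjoint, which upgrades the interlacing to strict. The one structural difference is which matrix you diagonalize. The paper takes the spectral decomposition of $\mathcal{U}$ and evaluates $\chi_{_{\mathcal{U}\mathcal{S}}}$ at the eigenvalues $\lambda_j$ of $\mathcal{U}$, obtaining $\chi_{_{\mathcal{U}\mathcal{S}}}(\lambda_j)=\chi'_{_{\mathcal{U}}}(\lambda_j)\,\lambda_j(1-\beta)\left|a_j\right|^2\neq 0$ directly from the hypotheses (simple spectrum of $\mathcal{U}$ and nonvanishing $k$-th components of its eigenvectors). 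You instead diagonalize $\mathcal{U}\mathcal{S}$, which forces you to transfer both properties to $\mathcal{U}\mathcal{S}$. Your transfer argument --- an eigenvector of $\mathcal{U}\mathcal{S}$ with vanishing $k$-th component is fixed by $\mathcal{S}$ and hence is an eigenvector of $\mathcal{U}$ with vanishing $k$-th component --- is correct, and it also yields the simplicity of $\sigma(\mathcal{U}\mathcal{S})$ that you need in order to have $\chi'_{_{\mathcal{U}\mathcal{S}}}(\lambda_j)\neq 0$: a multiple eigenvalue would have an eigenspace containing a vector with zero $k$-th component. Be aware, however, that the justification you actually wrote for simplicity, namely that a unitary rank-one perturbation of a unitary matrix with simple spectrum again has simple spectrum, is false in general (take $\mathcal{U}=\diag(1,-1)$ and $\mathcal{S}=\diag(-1,1)$, so that $\mathcal{U}\mathcal{S}=-\mathcal{I}$); simplicity must be routed through the eigenvector hypothesis as just indicated. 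Your first, abandoned attempt via the second sentence of Lemma \ref{AG} with a trivial decomposition is rightly discarded, and there is a harmless conjugation slip in the identity $z_j^*\,\mathcal{U}e_k=\lambda_j\beta^{-1}\overline{e_k^Tz_j}$, which does not affect the nonvanishing conclusion. The paper's choice of diagonalizing $\mathcal{U}$ simply avoids the transfer step altogether.
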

\begin{proof}
Without loss of generality we can assume that $k=1$, and so $\mathcal{S}= \diag \, (\beta,\mathcal{I})$ with $\beta \in \mathbb{S}^1 \setminus \{1\}$. Let $\mathcal{U}=\mathcal{Z} \Lambda \mathcal{Z}^*$ be the spectral decomposition of $\mathcal{U}$ in which $\Lambda=\diag \,(\lambda_1, \dots,\lambda_n)$ and $\mathcal{Z}=(z_1 \dots z_n)$. Arguing as in the proof of Lemma \ref{AG} we have 
\begin{align}\label{equal2}
\chi_{_{\mathcal{U}\mathcal{S}}}(\lambda_j)= \chi'_{_\mathcal{U}}(\lambda_j) \,  z_j^* u v^T z_j\, .
\end{align} Let $a_j\neq0$ be the first component of the vector $z_j$. Then 
$$
 z_j^* u v^T z_j=z_j^* \mathcal{U}(\mathcal{I}-\mathcal{S})z_j=\lambda_j (1-\beta) \left|a_j\right|^2 \neq0\,.
$$
Thus the result follows from \eqref{equal2} and the first sentence of Lemma \ref{AG}.
\end{proof}

\begin{lemma}\label{Laux}
Let $\alpha_j \in \de$ ($j=0,1,\dots,n-1$) and $b_n \in \mathbb{S}^1$.  The following sentences hold:
\begin{enumerate}
\item [{\rm (i)}]  Let $\mathcal{S}$ be a diagonal matrix obtained from the $(n+1)$-by-$(n+1)$ identity matrix by replacing one of its diagonal entries with a number on $\mathbb{S}^1\setminus \{1\}$. Then $\mathcal{C}(\alpha_0,\dots,\alpha_{n-1},b_n)$ and $\mathcal{C}(\alpha_0,\dots,$ $\alpha_{n-1},b_n) \mathcal{S}$ have strictly interlacing eigenvalues on $\mathbb{S}^1$.
\item [{\rm (ii)}] Let $\mathcal{C}(\alpha_0,\dots,\alpha_{n-1},b_n)$ be partitioned as in \eqref{C}. Then, for each $0\leq m < n$, $\mathcal{C}_{22}$ has no eigenvalues on $\mathbb{S}^1$.
\end{enumerate}
\end{lemma}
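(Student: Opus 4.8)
The plan is to reduce both parts to facts that are already in hand: part (i) to Lemma \ref{ideafinal}, and part (ii) to a one-line unitarity computation combined with the non-vanishing of the components of the eigenvectors of $\mathcal{C}$, both recalled in Section \ref{sec:intro}.

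For part (i), I would take $\mathcal{U}:=\mathcal{C}(\alpha_0,\dots,\alpha_{n-1},b_n)$ and let $k$ be the index of the diagonal entry of the identity that has been replaced, so that $\mathcal{S}$ is precisely the matrix appearing in the statement of Lemma \ref{ideafinal}. The three hypotheses of that lemma are then met: $\mathcal{C}$ is unitary (being the product $\mathcal{L}\mathcal{M}$ of the unitary matrices $\mathcal{L}$ and $\mathcal{M}$); its eigenvalues are simple, as recalled in Section \ref{sec:intro}; and, again as recalled there, every eigenvector of $\mathcal{C}$ has all of its components different from zero, in particular the $k$-th one. Lemma \ref{ideafinal} then yields at once that $\mathcal{C}(\alpha_0,\dots,\alpha_{n-1},b_n)$ and $\mathcal{C}(\alpha_0,\dots,\alpha_{n-1},b_n)\mathcal{S}$ have strictly interlacing eigenvalues on $\mathbb{S}^1$.

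For part (ii), I would argue by contradiction. Since $0\le m<n$, the block $\mathcal{C}_{22}$ in \eqref{C} is a genuine proper principal submatrix of $\mathcal{C}$. Suppose $\lambda\in\mathbb{S}^1$ were an eigenvalue of $\mathcal{C}_{22}$ with a unit eigenvector $v$, and set $w:=(0,\dots,0,v^{T})^{T}\in\co^{n+1}$, the first $m+1$ entries being zero. Using the block form \eqref{C} one gets $\mathcal{C}w=(\mathcal{C}_{12}v,\lambda v)^{T}$; since $\mathcal{C}$ is unitary and $|\lambda|=1$, comparing norms gives $\|\mathcal{C}_{12}v\|^2=\|w\|^2-|\lambda|^2\|v\|^2=0$, hence $\mathcal{C}_{12}v=0$ and therefore $\mathcal{C}w=\lambda w$. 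Thus $w$ would be an eigenvector of $\mathcal{C}$ whose first $m+1\ge 1$ components vanish, contradicting the fact (recalled in Section \ref{sec:intro}) that all components of every eigenvector of $\mathcal{C}$ are nonzero. Hence $\mathcal{C}_{22}$ has no eigenvalue on $\mathbb{S}^1$.

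There is no serious obstacle in either part: the whole point is that the two structural properties of $\mathcal{C}(\alpha_0,\dots,\alpha_{n-1},b_n)$ already isolated in Section \ref{sec:intro} — simplicity of the spectrum and non-vanishing of the eigenvector components — do all of the work. The only care needed is to make sure these are invoked for the matrix $\mathcal{C}(\alpha_0,\dots,\alpha_{n-1},b_n)$ itself (the paraorthogonal case, $b_n\in\mathbb{S}^1$), for which they hold, rather than for any truncation of it; the norm identity in part (ii) then pins down exactly when a proper principal submatrix of a unitary matrix can have a spectral point on the circle, namely only if that submatrix decouples — which the eigenvector property forbids here.
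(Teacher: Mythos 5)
Your proof of part (i) is the paper's proof verbatim: apply Lemma \ref{ideafinal} to $\mathcal{U}=\mathcal{C}(\alpha_0,\dots,\alpha_{n-1},b_n)$, using the simplicity of its spectrum and the non-vanishing of the components of its eigenvectors, both recalled in Section \ref{sec:intro}. Your proof of part (ii) is correct but takes a genuinely more direct route. The paper views $\mathcal{C}_{22}$ (or $\mathcal{C}_{22}^T$, depending on the parity of $m$) as the trailing principal submatrix of the two unitary matrices $\mathcal{C}(\alpha_m,\dots,\alpha_{n-1},b_n)$ and $\mathcal{C}(\alpha_m,\dots,\alpha_{n-1},b_n)\,\mathcal{S}$, argues that a unimodular eigenvalue of that common block would be a common eigenvalue of both matrices, and derives a contradiction with the strict interlacing of part (i); this requires an even/odd case split and a detour through the lower-order CMV matrices. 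You instead apply the underlying decoupling fact once and directly to the full matrix: if $\mathcal{C}_{22}v=\lambda v$ with $|\lambda|=1$, the norm identity $\|\mathcal{C}w\|=\|w\|$ for $w=(0,v^T)^T$ forces $\mathcal{C}_{12}v=0$, so $w$ is an eigenvector of $\mathcal{C}(\alpha_0,\dots,\alpha_{n-1},b_n)$ with vanishing components, contradicting the eigenvector property already on record. This is exactly the mechanism the paper leaves implicit in the phrase ``these matrices share all the eigenvalues of $\mathcal{C}_{22}$ on $\mathbb{S}^1$''; by invoking it against the eigenvector property of $\mathcal{C}$ itself rather than against part (i), you avoid the parity discussion and the auxiliary matrices entirely. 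Both arguments ultimately rest on unproved-but-referenced structural facts from Section \ref{sec:intro}, so neither is more self-contained in that respect; yours is shorter and, to my eye, cleaner.
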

\begin{proof}
(i) The result follows directly from Lemma \ref{ideafinal} and the fact that all the components of the eigenvectors of $\mathcal{C}(\alpha_0,\dots,\alpha_{n-1},b_n)$  are nonzero. 

(ii) Assume that $m$ is even.  Note that $\mathcal{C}_{22}$ is the $(n-m)$-by-$(n-m)$ trailing principal submatrix of each of the matrices $\mathcal{C}(\alpha_m,\dots,\alpha_{n-1},b_n)$ and $\mathcal{C}(\alpha_m,\dots,\alpha_{n-1},b_n) \, \mathcal{S}$, where $\mathcal{S}:= \diag \, (\beta,\mathcal{I})$. Suppose the assertion (ii) is false. Since $ \mathcal{C}(\alpha_m,\dots,$ $\alpha_{n-1},b_{n})$ and  $\mathcal{C}(\alpha_m,\dots,\alpha_{n-1},b_{n})  \, \mathcal{S}$ are unitary matrices, these matrices share all the eigenvalues of $\mathcal{C}_{22}$ on $\mathbb{S}^1$, which contradicts sentence (i). If $m$ is odd, we argue in the same way noting that  $\mathcal{C}_{22}^T$ is the $(n-m)$-by-$(n-m)$ trailing principal submatrix of each of the matrices $\mathcal{C}(\alpha_m,\dots,\alpha_{n-1},b_n)$ and $\mathcal{S} \,\mathcal{C}(\alpha_m,\dots,\alpha_{n-1},b_n)$.
\end{proof}

\begin{lemma}\label{aux}
Let $\alpha_j \in \de$ ($j=0,1,\dots,n-1$) and $b_n \in \mathbb{S}^1$. Let $\mathcal{C}(\alpha_0,\dots,\alpha_{n-1},b_n)$ be partitioned as in \eqref{C}, where $0 \leq m<n$. Let $b_m \in \mathbb{S}^1$ and define  $b_m(\zeta)$ via \eqref{def} for each $\zeta \in \mathbb{S}^1$. Then $\mathcal{C}(\alpha_0,\dots,\alpha_{n-1},b_{n}) $ and $\mathcal{C}(\alpha_0,\dots,\alpha_{m-1},b_{m})$ have at most $\min \{m+1,n-m\}$ common eigenvalues, which consist of the set of different solutions $\zeta$ of the equation $b_m(\zeta)=b_m$ on $\sigma(\mathcal{C}(\alpha_0,\dots,\alpha_{m-1},b_m))$. \end{lemma}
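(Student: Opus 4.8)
The plan is to combine a description of the spectrum of $\mathcal{C}(\alpha_0,\dots,\alpha_{n-1},b_n)$ in terms of the pseudo reflection coefficients of \eqref{def}, which pins down the set of common eigenvalues, with a degree count for the equation $b_m(\zeta)=b_m$, which gives the bound $\min\{m+1,n-m\}$. First I would record the following consequence of \eqref{def}, going back to Delsarte and Genin. By Definition \ref{POPUC}, for every $w\in\mathbb{S}^1$ and $0\le k\le n$ one has $\det\big(z\mathcal{I}-\mathcal{C}(\alpha_0,\dots,\alpha_{k-1},w)\big)=z\,\varPhi_k(z)-\overline{w}\,\varPhi_k^*(z)$, where $\varPhi_k$ is the monic orthogonal polynomial on the unit circle with Verblunsky coefficients $\alpha_0,\dots,\alpha_{k-1}$ (so that $\varPhi_{j+1}=z\varPhi_j-\overline{\alpha}_j\varPhi_j^*$) and $\varPhi_k^*$ its reversed polynomial; note $\varPhi_k(\zeta)\neq0\neq\varPhi_k^*(\zeta)$ and $|\varPhi_k(\zeta)|=|\varPhi_k^*(\zeta)|$ for $\zeta\in\mathbb{S}^1$. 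Using the Szeg\H{o} recurrence once, a short computation shows that for $\zeta\in\mathbb{S}^1$ and $0\le j<n$ the point $\zeta$ annihilates $z\,\varPhi_{j+1}(z)-\overline{b_{j+1}(\zeta)}\,\varPhi_{j+1}^*(z)$ if and only if it annihilates $z\,\varPhi_{j}(z)-\overline{b_{j}(\zeta)}\,\varPhi_{j}^*(z)$; indeed \eqref{def} is exactly the formula making the two values of $\varPhi_j^*(\zeta)/\varPhi_j(\zeta)$ forced by these equations agree. Iterating downward from $j=n$, where $b_n(\zeta)=b_n$, yields, for every $0\le k\le n$,
\[
\zeta\in\sigma\big(\mathcal{C}(\alpha_0,\dots,\alpha_{n-1},b_n)\big)\iff\zeta\in\sigma\big(\mathcal{C}(\alpha_0,\dots,\alpha_{k-1},b_k(\zeta))\big),\qquad\zeta\in\mathbb{S}^1.
\]

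Next, for fixed $\zeta\in\mathbb{S}^1$ the value of $z\,\varPhi_m(z)-\overline{w}\,\varPhi_m^*(z)$ at $z=\zeta$ is affine in $\overline{w}$ with nonzero coefficient $-\varPhi_m^*(\zeta)$, so exactly one $w=w_0(\zeta)\in\mathbb{S}^1$ (the one with $\overline{w_0(\zeta)}=\zeta\,\varPhi_m(\zeta)/\varPhi_m^*(\zeta)$, automatically of modulus one) satisfies $\zeta\in\sigma\big(\mathcal{C}(\alpha_0,\dots,\alpha_{m-1},w_0(\zeta))\big)$. Taking $k=m$ in the displayed equivalence, a point $\zeta\in\mathbb{S}^1$ belongs to $\sigma\big(\mathcal{C}(\alpha_0,\dots,\alpha_{n-1},b_n)\big)\cap\sigma\big(\mathcal{C}(\alpha_0,\dots,\alpha_{m-1},b_m)\big)$ if and only if $b_m(\zeta)=w_0(\zeta)$ and $b_m=w_0(\zeta)$, i.e.\ if and only if $\zeta\in\sigma\big(\mathcal{C}(\alpha_0,\dots,\alpha_{m-1},b_m)\big)$ and $b_m(\zeta)=b_m$. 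This is the asserted description of the common eigenvalues.

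For the cardinality, the bound $m+1$ is immediate because $\mathcal{C}(\alpha_0,\dots,\alpha_{m-1},b_m)$ has exactly $m+1$ (simple) eigenvalues. For the bound $n-m$, put $w:=\overline{\zeta}=\zeta^{-1}$ and regard $b_m(\zeta)$ as a rational function of $w$: by \eqref{def} and a one-step induction downward from $b_n(\zeta)=b_n$ one gets $b_m(\zeta)=A(w)/B(w)$ with $\deg A,\deg B\le n-m$. The function $\zeta\mapsto b_m(\zeta)$ is not identically equal to $b_m$ — otherwise the displayed equivalence would force $\sigma\big(\mathcal{C}(\alpha_0,\dots,\alpha_{n-1},b_n)\big)=\sigma\big(\mathcal{C}(\alpha_0,\dots,\alpha_{m-1},b_m)\big)$, impossible since both matrices have simple spectra and $n+1\neq m+1$ — so $b_m(\zeta)=b_m$ reduces to a nonzero polynomial equation of degree at most $n-m$ in $w$, with at most $n-m$ solutions $\zeta\in\mathbb{S}^1$, whence at most $\min\{m+1,n-m\}$ common eigenvalues. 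The bound $n-m$ can alternatively be read off from the Arbenz--Golub mechanism of Lemma \ref{AG}: replacing the block $\Theta(\alpha_m)$ in $\mathcal{C}(\alpha_0,\dots,\alpha_{n-1},b_n)$ by the unique diagonal unitary with first entry $\overline{b}_m$ for which $\mathcal{I}-\diag(\overline{b}_m,\gamma_m)\Theta(\alpha_m)^{*}$ has rank one — which forces precisely the value $\gamma_m$ of \eqref{auxgamma} — exhibits $\mathcal{C}(\alpha_0,\dots,\alpha_{n-1},b_n)$ as a rank-one unitary multiplicative perturbation of a block matrix whose diagonal blocks are $\mathcal{C}(\alpha_0,\dots,\alpha_{m-1},b_m)$ and (up to transposition) $\mathcal{N}$; after verifying the hypotheses of Lemma \ref{AG} with the help of Lemmas \ref{ideafinal} and \ref{Laux}, the common spectrum lies in $\sigma(\mathcal{N})$. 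This is also the decomposition underlying the remaining assertions of Theorem \ref{1}(ii).

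The delicate step is the first equivalence: one must verify that the seemingly ad hoc substitution \eqref{def} is precisely the one under which the vanishing of these paraorthogonal polynomials at $\zeta$ is inherited along the Szeg\H{o} ladder — equivalently, that $b_j(\zeta)$ is forced by demanding that $z\,\varPhi_j(z)-\overline{b_j(\zeta)}\,\varPhi_j^*(z)$ and $z\,\varPhi_{j+1}(z)-\overline{b_{j+1}(\zeta)}\,\varPhi_{j+1}^*(z)$ vanish simultaneously at $\zeta$. Once that identity is in hand, the rest is routine.
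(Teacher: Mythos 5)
Your argument is correct, but it reaches the crucial intermediate fact by a genuinely different route than the paper. The paper works at the matrix level: it quotes Bohnhorst's Schur--complement identity $\mathcal{C}(\alpha_0,\dots,\alpha_{m-1},b_m(\zeta))=\mathcal{C}_{11}-\mathcal{C}_{12}(\mathcal{C}_{22}-\zeta \mathcal{I})^{-1}\mathcal{C}_{21}$ and combines it with sentence (ii) of Lemma \ref{Laux} (no eigenvalues of $\mathcal{C}_{22}$ on $\mathbb{S}^1$) to get the factorization $\det\big(\zeta\mathcal{I}-\mathcal{C}(\alpha_0,\dots,\alpha_{n-1},b_n)\big)=\det\big(\zeta\mathcal{I}-\mathcal{C}(\alpha_0,\dots,\alpha_{m-1},b_m(\zeta))\big)\det\big(\zeta\mathcal{I}-\mathcal{C}_{22}\big)$ on $\mathbb{S}^1$, and then concludes exactly as you do, using that distinct boundary parameters give disjoint spectra. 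You obtain the same equivalence $\zeta\in\sigma\big(\mathcal{C}(\alpha_0,\dots,\alpha_{n-1},b_n)\big)\iff\zeta\in\sigma\big(\mathcal{C}(\alpha_0,\dots,\alpha_{m-1},b_m(\zeta))\big)$ at the polynomial level, via the POPUC representation $z\varPhi_k(z)-\overline{w}\varPhi_k^*(z)$ (the result of Simon cited in Definition \ref{POPUC}) and the Szeg\H{o} ladder; your ``short computation'' does check out: using $\varPhi_{j+1}^*=\varPhi_j^*-\alpha_j z\varPhi_j$ one finds $\zeta\varPhi_{j+1}(\zeta)-\overline{b_{j+1}(\zeta)}\varPhi_{j+1}^*(\zeta)=\big(\zeta+\alpha_j\overline{b_{j+1}(\zeta)}\big)\big(\zeta\varPhi_j(\zeta)-\overline{b_j(\zeta)}\varPhi_j^*(\zeta)\big)$ with $b_j(\zeta)$ exactly as in \eqref{def} and with nonvanishing prefactor since $|\alpha_j|<1$, so the step is sound (it does rely on $\varPhi_j$ and $\varPhi_j^*$ being zero-free on $\mathbb{S}^1$, which you correctly invoke). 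This is in effect the Delsarte--Genin pseudo-reflection-coefficient mechanism that the paper relegates to a footnote. What each approach buys: the paper's route stays entirely within the matrix framework already built (and reuses Lemma \ref{Laux}(ii), which is needed later anyway), whereas yours is independent of the Schur complement and of Lemma \ref{Laux}(ii) but imports standard OPUC facts; on the other hand you make the bound $n-m$ fully explicit through the degree count in $w=\overline{\zeta}$ and the non-degeneracy of $b_m(\zeta)\equiv b_m$, a point the paper leaves implicit. Your closing sketch via Lemma \ref{AG} essentially reproduces the paper's proof of sentence (ii) of Theorem \ref{1}, not of this lemma, and is not needed here.
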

\begin{proof} 
We begin by noting that
\begin{align}\label{equa}
\det \big( \zeta \mathcal{I}- \mathcal{C}_n \big)=\det \big( \zeta \mathcal{I}- \mathcal{C}(\alpha_0,\dots,\alpha_{m-1},b_m(\zeta)) \big) \det \big( \zeta \mathcal{I}-\mathcal{C}_{22}\big)
\end{align}
for each $\zeta \in \mathbb{S}^1$. Indeed, by sentence (ii) of Lemma \ref{Laux}, $\zeta \mathcal{I}-\mathcal{C}_{22}$ is nonsingular, hence \eqref{equa} follows from the equality (cf. \cite[Equation $3.41$]{B93})
 $$
 \mathcal{C}(\alpha_0,\dots,\alpha_{m-1},b_m(\zeta))=\mathcal{C}_{11}-\mathcal{C}_{12}(\mathcal{C}_{22}-\zeta \mathcal{I})^{-1}\mathcal{C}_{21}\, ,
 $$ 
after taking into account that the Schur complement of $\zeta \mathcal{I}-\mathcal{C}_{22}$ in $\zeta \mathcal{I}-\mathcal{C}(\alpha_0,\dots,\alpha_{n-1},$ $b_{n})$ is 
$\zeta \mathcal{I}-\big(\mathcal{C}_{11}-\mathcal{C}_{12}(\mathcal{C}_{22}-\zeta \mathcal{I})^{-1}\mathcal{C}_{21}\big)$. The result follows from \eqref{equa} and the fact that for $\nu, \zeta \in \mathbb{S}^1$, with $\nu \neq \zeta$, $\mathcal{C}(\alpha_0,\dots,\alpha_{m-1},\nu)$ and $\mathcal{C}(\alpha_0,\dots,\alpha_{m-1},\zeta)$ have no common eigenvalues (see e.g. \cite[Theorem $2.14.4$]{S11}; alternatively, apply sentence (i) of Lemma \ref{Laux}).  \end{proof}

\subsection{Proof of Theorem \ref{1}}\label{proof}
(i) Let $\mathcal{S}:=\diag \, (\mathcal{I}_{m}, \overline{\beta}, \mathcal{I}_{n-m})$, $\mathcal{D}:=\diag \, (\mathcal{I}_{m}, $ $\mathcal{J}_{n-m+1}^\beta)$, and $\mathcal{V}:=\diag \, (\mathcal{I}_{m+1}, $ $\mathcal{J}_{n-m}^\beta)$, where $\mathcal{J}_k^\beta:=\diag \, (\beta,1,\beta,1, \dots )$ is a $k$-by-$k$ diagonal matrix. Note that
\begin{align}\label{tab}
\begin{pmatrix}
\, \overline{\beta} & 0\\
0 & 1
\end{pmatrix} \Theta(\alpha) \begin{pmatrix}
1 & 0\\
0 & \beta
\end{pmatrix}=\Theta(\alpha \beta)\, .
\end{align}
Using \eqref{tab} it is easily seen that \footnote{A different proof is given in \cite[Theorem $5.1$]{S06}.}
\begin{align*}
\mathcal{D}^{*} \, \mathcal{C}(\alpha_0,\dots,\alpha_{n-1},b_{n}) \, \mathcal{D} \, \mathcal{S}=\big( \mathcal{D}^{*} \, \mathcal{L}\, \mathcal{V}\big) \, \big(\mathcal{V}^* \, \mathcal{M} \, \mathcal{D} \, \mathcal{S}\big)=\mathcal{C}_m^\beta\, ,
\end{align*}
when $m$ is even. Similarly, the transpose of \eqref{tab} leads to
\begin{align*}
\mathcal{S} \, \mathcal{D} \, \mathcal{C}(\alpha_0,\dots,\alpha_{n-1},b_{n}) \, \mathcal{D}^*=\big( \mathcal{S} \, \mathcal{D} \, \mathcal{L}\, \mathcal{V}^*\big) \, \big(\mathcal{V} \, \mathcal{M} \, \mathcal{D}^*\big)=\mathcal{C}_m^\beta\, ,
\end{align*}
when $m$ is odd. The result follows from sentence (i) of Lemma \ref{Laux}.


(ii)  Define the block diagonal matrix $S:=\diag \, (\mathcal{I}_m, \mathcal{Z},\mathcal{I}_{n-m-1})$, where
$$
\mathcal{Z}= \Theta_{m}^*\begin{pmatrix}
\overline{b}_{m} &0 \\
0 & \gamma_{m}
\end{pmatrix}\,. 
$$
Hence
$$
\mathcal{C}(\alpha_0,\dots,\alpha_{m-1},b_{m}) \, \oplus \, \mathcal{N}=   \mathcal{C}(\alpha_0,\dots,\alpha_{n-1},b_{n}) \, \mathcal{S}\, ,
$$
when $m$ is odd, and
$$
\mathcal{C}(\alpha_0,\dots,\alpha_{m-1},b_{m}) \, \oplus \,  \mathcal{N}^T = \mathcal{S}^T \, \mathcal{C}(\alpha_0,\dots,\alpha_{n-1},b_{n})\, ,
$$
when $m$ is even. Note that $\mathcal{N}$ has simple eigenvalues (on $\mathbb{S}^1$) by sentence (i) of Lemma \ref{Laux}. The result follows from Lemma \ref{AG}, sentence (ii) of Lemma \ref{Laux}, and Lemma \ref{aux}.

\section*{Acknowledgment}
The authors thank the Bielefeld University Library for kindly sending them a hard copy of Birgit Bohnhorst's Ph.D. Thesis. KC is supported by the Portuguese Government through the Funda\c{c}\~ao para a Ci\^encia e a Tecnologia (FCT) under the grant SFRH/BPD/101139/2014. This work is partially supported by the Centre for Mathematics of the University of Coimbra -- UID/MAT/00324/2013, funded by the Portuguese Government through FCT/MCTES and co-funded by the European Regional Development Fund through the Partnership Agreement PT2020. JP is also partially supported by Direcci\'on General de Investigaci\'on Cient\'ifica y T\'ecnica, Ministerio de Econom\'ia y Competitividad of Spain under the project MTM2015--65888--C4--4--P.

\bibliographystyle{plain}      

\bibliography{bib}   

\end{document}